\newtheorem{thm}{Theorem}[section]
\newtheorem{cor}[thm]{Corollary}
\newtheorem{lem}[thm]{Lemma}
\newtheorem{prop}[thm]{Proposition}
\theoremstyle{remark}
\theoremstyle{definition}
\newcommand{\Cay}[2]{\mathrm{Cay}(#1,#2)}		%Cayley graph
\newcommand{\Aut}[1]{\mathrm{Aut}(#1)}			%automorphism group
\title{$\mathbb Z_3^8$ is not a CI-group}
\author{Joy Morris}
\thanks{Supported by the Natural Science and Engineering Research Council of Canada (grant RGPIN-2017-04905).}
\address{Department of Mathematics and Computer Science\\
	University of Lethbridge\\
	Lethbridge, AB T1K 3M4\\
	Canada}
\email{joy.morris@uleth.ca}
\begin{document}

\begin{abstract}
A Cayley graph $\Cay{G}{S}$ has the CI (Cayley Isomorphism) property if for every isomorphic graph $\Cay{G}{T}$, there is a group automorphism $\alpha$ of $G$ such that $S^\alpha=T$. The DCI (Directed Cayley Isomorphism) property is defined analogously on digraphs. A group $G$ is a CI-group if every Cayley graph on $G$ has the CI property, and is a DCI-group if every Cayley digraph on $G$ has the DCI property. Since a graph is a special type of digraph, this means that every DCI-group is a CI-group, and if a group is not a CI-group then it is not a DCI-group, but there are well-known examples of groups that are CI-groups but not DCI-groups.

In 2009, Spiga showed that $\mathbb Z_3^8$ is not a DCI-group, by producing a digraph that does not have the DCI property. He also showed that $\mathbb Z_3^5$ is a DCI-group (and therefore also a CI-group). Until recently the question of whether there are elementary abelian $3$-groups that are not CI-groups remained open. In a recent preprint with Dave Witte Morris, we showed that $\mathbb Z_3^{10}$ is not a CI-group. In this paper we show that with slight modifications, the underlying undirected graph of order $3^8$ described by Spiga is does not have the CI property, so $\mathbb Z_3^8$ is not a CI-group.
\end{abstract}

\keywords{Cayley graphs, elementary abelian groups, CI graphs, CI groups, isomorphism}

\subjclass[2020]{05C25}

\maketitle

\section{Introduction}
Let $G$ be a group, and $S \subseteq G$. The Cayley digraph $\Cay{G}{S}$ is the digraph whose vertices are the elements of the group $G$, with an arc from $g$ to $h$ if and only if $hg^{-1} \in S$. If $S=S^{-1}$ then for any arc from $g$ to $h$ there is a paired arc from $h$ to $g$; we replace these two arcs with a single undirected edge and call the resulting structure the Cayley graph $\Cay{G}{S}$. It is well-known that $\Gamma$ is isomorphic to a Cayley graph on $G$ if and only if $\Aut{\Gamma}$ has a regular subgroup isomorphic to $G$. Typically we abuse notation by ignoring the isomorphism and simply saying that $\Gamma$ is a Cayley graph on $G$ in this event.

The Cayley graph $\Cay{G}{S}$ has the CI (Cayley Isomorphism) property if for every isomorphic graph $\Cay{G}{T}$, there is a group automorphism $\alpha$ of $G$ such that $S^\alpha=T$. Note that whenever $\alpha$ is an automorphism of $G$, it induces a graph isomorphism from $\Cay{G}{S}$ to $\Cay{G}{S^\alpha}$. This means that essentially, a Cayley graph has the CI property if all of its isomorphisms to other Cayley graphs on the same group have algebraic justifications: group automorphisms that induce not necessarily that particular isomorphism, but an isomorphism to the same graph. The CI problem is the problem of determining which graphs have the CI property. The DCI (Directed Cayley Isomorphism) property and the DCI problem are defined analogously on digraphs. 

Although work on the (D)CI problem dates back at least to 1967 \cite{Adam}, the standard terminology was coined and fundamental results about the problem were proved by Babai in \cite{babai}. A group $G$ is a CI-group if every Cayley graph on $G$ has the CI property, and is a DCI-group if every Cayley digraph on $G$ has the DCI property. Since a graph is a special type of digraph, this means that every DCI-group is a CI-group, and if a group is not a CI-group then it is not a DCI-group, but there are well-known examples of groups that are CI-groups but not DCI-groups. For example, Muzychuk \cite{Muzy-circ-1,Muzy-circ-2} characterised cyclic groups according to which are DCI and which are CI: the cyclic group of order $n$ is a DCI-group if and only if $n\in \{k,2k,4k\}$ where $k$ is odd and square-free. It is a CI-group if and only if it is a DCI-group, or $n \in \{8,9,18\}$.

Once cyclic groups were completely understood with respect to the CI and DCI problems, elementary abelian groups became a natural class of groups to consider. This class of groups has become even more fundamentally important in understanding the (D)CI problem over time, since the combined work of a number of researchers has shown that any (D)CI group is a direct product of up to three factors, each of which is either small, abelian, or the semidirect product of an abelian group with a small cyclic group. (See for example \cite{Ted,TedMishaPablo,LiLuPalfy}, although Math Reviewers have noted some errors in the statements of the relevant results.)

In a 2003 paper, Muzychuk \cite{Muzy-elem-abel} proved that an elementary abelian group of sufficiently high rank is not a DCI-group. He did not consider the undirected problem in that paper, and the rank he achieved for an elementary abelian $p$-group was $2p-1+\binom{2p-1}{p}$. Spiga \cite{Spiga-CI} improved this rank to $4p-2$. Both of these papers may have introduced some confusion into the problem as they talk about the CI problem and property but use directed graphs throughout, so in fact prove that an elementary abelian $p$-group of sufficiently high rank is not a DCI-group although the statement of their results say that this is not a CI-group. Somlai \cite{Somlai} addressed this issue, and improved the previous results by showing that an elementary abelian $p$-group of rank at least $2p+3$ is not a CI-group when $p \ge 5$. He did also prove a similar result when $p=3$ but in this case was only able to show that the group is not a DCI-group, and noted that the problem of the existence of a non-CI elementary abelian $3$-group remained open.
For the $p=3$ case, Spiga \cite{Spiga-CI-3} had also previously shown that $\mathbb Z_3^8$ is not a DCI-group, by producing a digraph that does not have the DCI property. 

On the other side of things, the best known result \cite{Istvan} shows that $\mathbb Z_p^5$ is a DCI-group. It is known \cite{Nowitz} that $\mathbb Z_2^6$ is not a CI-group, but there remains a gap in our knowledge for every prime $p>2$.

In a recent preprint with Dave Witte Morris \cite{Dave}, we show that $\mathbb Z_3^{10}$ is not a CI-group, and more generally demonstrate a method for using non-DCI digraphs to construct non-CI graphs whose order is a fairly small multiple of the order of the original digraph, and thereby find groups that are not CI-groups.

 In this paper we show that a slightly modified version of the underlying undirected graph of the non-CI digraph of order $3^8$ described by Spiga in \cite{Spiga-CI-3} does not have the CI property, so $\mathbb Z_3^8$ is not a CI-group.

\section{Two isomorphic Cayley graphs}

Let $\{w_1, w_2, w_3, v_1, v_2, v_3, v_4, v_5\}$ be a generating set for $G \cong \mathbb Z_3^8$. We define the following sets:
\begin{eqnarray*}
S_{0,0,0} &=& \{v_1-v_5, v_2+v_3-v_4+v_5, v_3-v_4+v_5, v_4+v_5, v_5\}\\
S_{1,0,0} &=& \{w_1+av_1+bv_2+cv_5 : a,b,c \in \mathbb Z_3\}\\
S_{0,1,0} &=& \{w_2+av_1+bv_3+cv_4+dv_5 : a,b,c,d \in \mathbb Z_3\}\\
S_{0,0,1} &=& \{w_3+av_2+bv_3+cv_4+dv_5 : a,b,c,d \in \mathbb Z_3\}\\
S_{1,1,0} &=& \{w_1+w_2+av_1+bv_2+cv_3+bv_4+dv_5 : a,b,c,d \in \mathbb Z_3\}\\
S_{1,0,1} &=& \{w_1+w_3+av_1+bv_2+av_3+cv_4+dv_5 : a,b,c,d \in \mathbb Z_3\}\\
S_{0,1,1} &=& \{w_2+w_3+av_1+bv_2+cv_3+dv_4-(a+b)v_5 : a,b,c,d \in \mathbb Z_3\}\\
S_{1,1,1} &=& \{w_1+w_2+w_3+av_1+bv_2+cv_3+dv_4+(-a-b+c+d)v_5 : a,b,c,d \in \mathbb Z_3\}\\
S_{2,1,1} &=& \{2w_1+w_2+w_3+av_1+bv_2+cv_3+dv_4-(a+b+c+d)v_5 : a,b,c,d \in \mathbb Z_3\}\\
S_{1,2,1} &=& \{w_1+2w_2+w_3+av_1+bv_2+cv_3+dv_4+(a+b-c+d)v_5 : a,b,c,d \in \mathbb Z_3\}\\
S_{1,1,2} &=& \{w_1+w_2+2w_3+av_1+bv_2+cv_3+dv_4+(a+b+c-d)v_5 : a,b,c,d \in \mathbb Z_3\}\\
S&=& S_{2,1,1}\cup S_{1,2,1} \cup S_{1,1,2}\bigcup_{0 \le i,j,k \le 1} S_{i,j,k}.
\end{eqnarray*}
For $0 \le i,j \le 1$ let $T_{i,j,k}=S_{i,j,k}$ except let $T_{1,1,1}=S_{1,1,1}+v_5$. Also let $$T_{2,1,1}=S_{2,1,1}-v_5, T_{1,2,1}=S_{1,2,1}-v_5, T_{1,1,2}=S_{1,1,2}-v_5.$$
Similar to $S$, let $$T= T_{2,1,1}\cup T_{1,2,1} \cup T_{1,1,2}\bigcup_{0 \le i,j,k \le 1} T_{i,j,k}.$$

The graphs we will be studying throughout this paper are $\Gamma_1=\Cay{G}{S \cup S^{-1}}$ and $\Gamma_2=\Cay{G}{T \cup T^{-1}}$. We will often abuse notation and terminology by conflating a group element with the corresponding vertex in one or both of these Cayley graphs.

For convenience of notation, we also define a partition of the elements of $G$ into subsets of cardinality $3^5$ by $$B_{i,j,k}=\{iw_1+jw_2+kw_3+av_1+bv_2+cv_3+dv_4+fv_5 : a,b,c,d,f \in \mathbb Z_3\}.$$

The digraphs studied in \cite{Spiga-CI-3} that are determined to be isomorphic but not via a group automorphism, and therefore not to have the DCI-property, are the Cayley digraphs 
$$\overrightarrow{\Gamma_1'}=\Cay{G}{(S\setminus S_{0,0,0}) \cup \{v_1,v_3,v_4,v_5\}}$$ 
$$\text{and }\overrightarrow{\Gamma_2'}=\Cay{G}{(T \setminus T_{0,0,0})\cup \{v_1,v_3,v_4,v_5\}}.$$

\begin{prop}\label{prop-iso}
There is a polynomial that maps $S\cup S^{-1}$ to $T\cup T^{-1}$ and acts as a graph isomorphism from $\Gamma_1$ to $\Gamma_2$. 
\end{prop}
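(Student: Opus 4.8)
The plan is to write down an explicit polynomial map $\phi\colon G\to G$ (a map on the exponents, i.e.\ an additive-plus-quadratic correction in the coordinates $w_1,w_2,w_3,v_1,\dots,v_5$) and verify two things: that $\phi$ carries $S\cup S^{-1}$ bijectively onto $T\cup T^{-1}$, and that $\phi$ is a graph automorphism-type map in the sense that it induces an isomorphism $\Gamma_1\to\Gamma_2$. Since a permutation $\phi$ of the vertex set $G$ of $\Gamma_1=\Cay{G}{S\cup S^{-1}}$ sends it isomorphically to $\Cay{G}{\phi(S\cup S^{-1})}=\Gamma_2$ precisely when $\phi$ maps edges to edges, i.e.\ when $\phi(y)-\phi(x)\in T\cup T^{-1}$ for every $x,y$ with $y-x\in S\cup S^{-1}$, the real content is a \emph{local} statement: for each of the generating ``blocks'' $S_{i,j,k}$ and its negative, and for each starting block $B_{i',j',k'}$, the difference $\phi(x+s)-\phi(x)$ must land in the prescribed translate $T_{i+i',\,j+j',\,k+k'}$ (indices taken mod $3$). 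This is exactly the bookkeeping Spiga carries out in \cite{Spiga-CI-3} for the directed version, so I would follow that template.

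First I would guess the map. The difference between $S$ and $T$ is concentrated entirely in the $v_5$-coordinate: $T$ is obtained from $S$ by adding $v_5$ to the block $S_{1,1,1}$ and subtracting $v_5$ from each of $S_{2,1,1},S_{1,2,1},S_{1,1,2}$, while leaving $S_{0,0,0}$ and the six blocks $S_{i,j,k}$ with $i+j+k\le 2$ untouched. So $\phi$ should fix the $w$- and $v_1,\dots,v_4$-coordinates and modify only the $v_5$-coordinate by a function of $(i,j,k)=$ (the $w$-part); concretely something like $v_5\mapsto v_5+f(i,j,k)$ where $f$ is the quadratic (in $\mathbb Z_3$) function that is $0$ on the eight points with $0\le i,j,k\le 1$ (and at $S_{0,0,0}$), equals $1$ at $(1,1,1)$, and equals $-1$ at the three points $(2,1,1),(1,2,1),(1,1,2)$. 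A natural candidate is $f(i,j,k)=ijk\cdot g$ plus lower-order terms, or more symmetrically something built from $i+j+k$ and $ij+jk+ki$; I would pin down $f$ by solving the interpolation over $\mathbb Z_3^3$ (in fact only the relevant $11$ cosets matter), then write $\phi$ as a genuine polynomial in $w_1,w_2,w_3$.

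Next I would verify that $\phi$ bijectively maps $S\cup S^{-1}$ onto $T\cup T^{-1}$. Because $\phi$ acts on each block $B_{i,j,k}$ by the translation $x\mapsto x+f(i,j,k)v_5$, and $S_{i,j,k}\subseteq B_{i,j,k}$, we get $\phi(S_{i,j,k})=S_{i,j,k}+f(i,j,k)v_5=T_{i,j,k}$ by the defining choice of $f$; similarly for the ``$2,1,1$''-type blocks, and $S_{0,0,0}\subseteq B_{0,0,0}$ is fixed. The inverses $S^{-1}$ lie in the blocks $B_{-i,-j,-k}$, and one checks $f(-i,-j,-k)=-f(i,j,k)$ for the relevant indices (this is where choosing $f$ odd, or at least odd on the support, pays off), so $\phi(S^{-1})=T^{-1}$.

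The genuine obstacle is the edge-preservation check: showing $\phi(x+s)-\phi(x)\in T\cup T^{-1}$ whenever $s\in S\cup S^{-1}$. Writing $x\in B_{i',j',k'}$ and $s\in S_{i,j,k}$ (so $x+s\in B_{i+i',j+j',k+k'}$), we have $\phi(x+s)-\phi(x)=s+\bigl(f(i+i',j+j',k+k')-f(i',j',k')\bigr)v_5$, and we must show this equals an element of $T_{i+i',j+j',k+k'}$, equivalently of $S_{i+i',j+j',k+k'}+f(i+i',j+j',k+k')v_5$. Since $s$ itself is $s_0+f_S(i,j,k)v_5$ for the ``untwisted'' representative $s_0$, this reduces to the arithmetic identity $f(i+i',j+j',k+k')-f(i',j',k')-f_S(i,j,k)=f(i+i',j+j',k+k')\pmod 3$ on all relevant index triples — i.e.\ $f_S(i,j,k)=-f(i',j',k')$, which cannot hold identically, so in fact the $v_1,\dots,v_4$ parameters $a,b,c,d$ of $s_0$ must absorb the discrepancy. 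This is precisely the point of the clever way the $v_5$-coefficients in the definitions of the $S_{i,j,k}$ depend on $a,b,c,d$: the map on $(a,b,c,d)$ is not the identity but a shear, and one must check block-by-block (there are $11\times 11$ combinations, cut down drastically by symmetry and by the fact that many $S_{i,j,k}$ coincide as \emph{sets} of cosets) that the shear is consistent, i.e.\ that $s_0$ lands in the right $S$-block after the $(a,b,c,d)$ reparametrisation forced by $\phi$. I expect to organise this as: (1) reduce to checking on one representative $x$ per block by translation-equivariance of $\phi$ over each $B_{i',j',k'}$; (2) handle the ``all indices $\le 1$'' sub-lattice where $f\equiv 0$ separately and easily; (3) do the remaining cases involving the $(2,1,1)$-type and $(1,1,1)$ blocks by direct substitution, exactly paralleling Spiga's digraph computation and additionally checking the new $S^{-1}$ edges, which by the oddness of $f$ follow formally from the $S$-edge cases.
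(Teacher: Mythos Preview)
Your proposal has a genuine gap: the guess that the isomorphism only perturbs the $v_5$-coordinate is wrong, and no amount of block-by-block checking will rescue it. Concretely, suppose $\phi$ acts on each block $B_{i,j,k}$ as $x\mapsto x+f(i,j,k)v_5$. Testing edge-preservation on $s\in S_{0,1,1}$ (where the $v_5$-coefficient is $-(a+b)$ with $a,b$ the $v_1,v_2$-coefficients, and $T_{0,1,1}=S_{0,1,1}$) forces $f(i',j'+1,k'+1)=f(i',j',k')$ for all $i',j',k'$; testing on $S_{1,1,1}$ forces $f(i'+1,j'+1,k'+1)-f(i',j',k')=1$; together these give $f(i,j,k)=i+h(j-k)$ for some $h$. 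Testing $S_{1,2,1}$ then forces $h(m+1)-h(m)=1$, but testing $S_{1,1,2}$ forces $h(m-1)-h(m)=1$, and these are inconsistent over $\mathbb Z_3$. So a pure $v_5$-shift cannot be a graph isomorphism. Your later remark about ``a shear on $(a,b,c,d)$'' is pointing in the right direction but contradicts your own setup: if $\phi$ fixes the $v_1,\dots,v_4$-coordinates then there is no shear available to absorb anything.

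The paper's argument avoids this entirely by quoting Spiga's explicit map
\[
\psi(\mathbf v)=\mathbf v+x_1x_2^2\,v_1+x_1x_3^2\,v_2+x_2^2x_3\,v_3+x_2x_3^2\,v_4+x_1x_2x_3\,v_5,
\]
which perturbs \emph{all} of $v_1,\dots,v_5$ by cubic polynomials in the $w$-coordinates. Spiga already proved $\psi$ is a digraph isomorphism $\overrightarrow{\Gamma_1'}\to\overrightarrow{\Gamma_2'}$, hence an isomorphism of the underlying undirected graphs. Since $\Gamma_1,\Gamma_2$ differ from those underlying graphs only in the edges inside each $B_{i,j,k}$, and $\psi$ restricted to any single $B_{i,j,k}$ is translation by a fixed element of $\langle v_1,\dots,v_5\rangle$ (so automatically preserves the Cayley-graph structure induced by $S_{0,0,0}^{\pm1}$ inside that block), $\psi$ is an isomorphism $\Gamma_1\to\Gamma_2$. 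That is the whole proof: no block-by-block edge verification is needed beyond the single observation that translations preserve within-block edges.
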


\begin{proof}
According to \cite{Spiga-CI-3}, the map $\psi: G \to G$ defined by 
$$\psi(\textbf{v})=\textbf{v}+x_1x_2^2v_1+x_1x_3^2v_2+x_2^2x_3+x_2x_3^2v_4+x_1x_2x_3v_5$$
for each $$\textbf{v}=\sum_{i=1}^3 x_iw_i + \sum_{j=1}^5 y_jv_j \in G$$
is a digraph isomorphism from $\overrightarrow{\Gamma_1'}$ to $\overrightarrow{\Gamma_2'}$. 

Our graphs $\Gamma_1$ and $\Gamma_2$ are very close to being the underlying undirected graphs of these digraphs $\overrightarrow{\Gamma_1'}$ and $\overrightarrow{\Gamma_2'}$, which must be isomorphic via $\psi$ since the digraphs are. Indeed, aside from the edges and arcs that lie inside each $B_{i,j,k}$, they are the same. Since $\psi$ fixes every $B_{i,j,k}$, it must act as an isomophism from $\Gamma_1$ to $\Gamma_2$ with respect to every edge that is not contained within some $B_{i,j,k}$. It remains to be shown that the edges that come from $S_{0,0,0}^{\pm1}$ are preserved by $\psi$.

For any fixed $(i,j,k)$, $\psi$ adds $\sum_{m=1}^5c_mv_m$ to each vertex of $B_{i,j,k}$, for some constants $c_1$ through $c_5$ that depend only on $i,j,k$. Thus the action of $\psi$ on $B_{i,j,k}$ is a translation by some element of $\langle v_1, \ldots, v_5\rangle$. This means that for any choice of $S\cap B_{0,0,0}$, as long as $T\cap B_{0,0,0}=S\cap B_{0,0,0}$, $\psi$ must preserve the edges that lie within $B_{i,j,k}$ (the edges that come from $S_{0,0,0}^{\pm1}$). Thus, $\psi$ is indeed an isomorphism from $\Gamma_1$ to $\Gamma_2$.
\end{proof}

\section{They are not isomorphic via a group automorphism}

We begin with a number of assertions about the structure of mutual neighbours of vertices in these graphs. These serve to limit how an arbitrary isomorphism between the graphs can act, even more so in the case where the isomorphism must also be a group automorphism.

The bound provided in our first lemma will shortly allow us to show that any isomorphism between the graphs must have an action on the partition $\{B_{i,j,k}\}$: that is, for any isomorphism $\varphi$, $B_{i,j,k}^\varphi=B_{i',j',k'}$ for some $i',j',k'$ that depends on $i,j,k$.

\begin{lem}\label{lem-mut-nbrs-outside}
Suppose $(i,j,k) \neq (0,0,0)$. Then for any element $g \in B_{i,j,k}$, the number of mutual neighbours of $e$ and $g$ in either $\Gamma_1$ or $\Gamma_2$ is at most $587$.  
\end{lem}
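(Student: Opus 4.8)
The plan is to reduce the lemma to an arithmetic estimate about the connection sets and then exploit the fact that almost every ``block'' of each connection set is a coset of a subgroup of $\langle v_1,\dots,v_5\rangle$. Write $C_1 = S\cup S^{-1}$ and $C_2 = T\cup T^{-1}$, so $\Gamma_i = \Cay{G}{C_i}$. Since each $C_i$ is symmetric, a vertex $x$ is a common neighbour of $e$ and $g$ in $\Gamma_i$ precisely when $x\in C_i$ and $x-g\in C_i$, so the number of mutual neighbours of $e$ and $g$ equals $|C_i\cap(C_i+g)|$; it thus suffices to bound this quantity by $587$ when $g\in B_{i,j,k}$ with $(i,j,k)\neq(0,0,0)$. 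For $\ell\in\mathbb Z_3^3$ set $C_i^{(\ell)}=C_i\cap B_\ell$. Inspecting the definitions of the sets $S_{a,b,c}$ (and the $\pm v_5$ translates used to build $T$), one sees that there are exactly $21$ labels $\ell$ with $C_i^{(\ell)}\neq\emptyset$ — the same set $L$ for $i=1,2$, since each $S_{a,b,c}^{\pm1}$ and each $\pm v_5$ translate stays inside a single block and does not change the underlying subgroup — that $C_i^{(0,0,0)}=S_{0,0,0}\cup S_{0,0,0}^{-1}$ has at most $10$ elements, and that for every $\ell\in L$ with $\ell\neq(0,0,0)$ the set $C_i^{(\ell)}$ is a coset in $B_\ell$ of a subgroup $H_\ell\le\langle v_1,\dots,v_5\rangle$, with $\dim H_\ell=3$ for $\ell\in\{(1,0,0),(2,0,0)\}$ and $\dim H_\ell=4$ otherwise.

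Next I would expand $C_i\cap(C_i+g)$ as the disjoint union of the sets $C_i^{(\ell)}\cap(C_i^{(m)}+g)$ over those pairs $(\ell,m)\in L^2$ with $\ell-m=(i,j,k)$ (all other pairs contribute nothing, since $C_i^{(m)}+g\subseteq B_{m+(i,j,k)}$). A term with $\ell,m\neq(0,0,0)$ is empty or a coset of $H_\ell\cap H_m$, so has size at most $3^{\dim(H_\ell\cap H_m)}\le 81$; a term involving the label $(0,0,0)$ has size at most $|C_i^{(0,0,0)}|\le 10$. The estimate then comes down to three bookkeeping observations: $(a)$ $m$ is determined by $\ell$, so there are at most $|L|=21$ terms; $(b)$ at most two terms involve $B_{0,0,0}$, namely the one with $\ell=(0,0,0)$ and the one with $m=(0,0,0)$; $(c)$ a term has $\dim(H_\ell\cap H_m)=4$ only if $H_\ell=H_m$ with $\ell\neq m$, which forces $\{\ell,m\}=\{\beta,-\beta\}$ for one of the nine dimension-$4$ labels $\beta$ of the pieces $S_{a,b,c}$, and a short computation shows those nine subgroups are pairwise distinct, so at most one term can have dimension $4$; every remaining term has $\dim(H_\ell\cap H_m)\le 3$ and so is at most $27$. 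Finally, if such a dimension-$4$ term occurs then $(i,j,k)=\pm\beta$, and both $\beta$ and $-\beta$ lie in $L$, so both of the $B_{0,0,0}$-terms are present; hence $|C_i\cap(C_i+g)|\le 81+2\cdot 10+18\cdot 27=587$, whereas if no dimension-$4$ term occurs the sum is at most $21\cdot 27=567<587$.

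The crux of the argument is observation $(c)$: one must actually verify that the nine dimension-$4$ direction subgroups $H_\ell$ are pairwise distinct — equivalently, that the nine linear functionals on $\mathbb Z_3^5$ cutting them out are pairwise non-proportional — and then make the packaging argument that a dimension-$4$ term always comes together with exactly the two $B_{0,0,0}$-terms, so that the worst case is $81+20+18\cdot27$ and not the larger $81+21\cdot27$. This is the step most prone to computational slips, but it is entirely finite and amounts to writing down, for each of the eleven pieces $S_{a,b,c}$, the subgroup of $\langle v_1,\dots,v_5\rangle$ swept out by its defining parameters and comparing these subgroups.
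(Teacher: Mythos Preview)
Your argument is correct and is essentially the same block-by-block count as the paper's: both decompose the common neighbours according to the blocks $B_{x,y,z}$, observe that at most $21$ blocks contribute, that the two contributions touching $B_{0,0,0}$ are at most $10$, and that all other contributions are at most $27$ except for a single block where the bound is $81$, yielding $10+10+81+18\cdot 27=587$. Your write-up is more explicit than the paper's in one respect---where the paper says ``it is tedious but straightforward to verify'' that generically the intersection has size at most $27$, you phrase this as $\dim(H_\ell\cap H_m)\le 3$ and reduce it to checking that the nine dimension-$4$ direction subgroups (equivalently, their defining linear functionals) are pairwise distinct---but the underlying computation and the final packaging are the same.
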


\begin{proof} 
In both graphs, there are six sets $B_{x,y,z}$ in which $e$ has no neighbours: the sets for which $\{x,y,z\}=\{0,1,2\}
$.  Furthermore there is an additional set $B_{0,0,0}$ in which $e$ has only $10$ neighbours (the elements of $S_{0,0,0}\cup S_{0,0,0}^{-1}=T_{0,0,0}\cup T_{0,0,0}^{-1}$). Twenty sets $B_{x,y,z}$ remain.

Suppose that $B_{x,y,z}$ is one of these sets, and contains mutual neighbours of $e$ and $g$. The neighbours of $e$ in $\Gamma_1$ (of which there are at most $81$) are given by either $S_{x,y,z}$ or $S_{-x,-y,-z}$ (whichever is defined). Similarly, the neighbours of $g$ are the elements of $g+S_{x-i,y-j,z-k}$ or of $g+S_{i-x,j-y,k-z}$ (whichever is defined). It is tedious but straightforward to verify that unless $(i,j,k)=(x,y,z)$ or $(i,j,k)=-(x,y,z)$ then the definitions of these sets force the set of mutual neighbours to include at most $27$ of the neighbours of $e$. If $(i,j,k)=(x,y,z)$ then $B_{x,y,z}$ has at most 10 neighbours of $g$, but $B_{-x,-y,-z}$ may contain $81$ mutual neighbours of $e$ and $g$, so these two blocks may have up to $91$ mutual neighbours rather than the $54$ we get from any other pair of blocks that have neighbours of $e$ and $g$. The same argument with the other connection set gives the same bound in $\Gamma_2$.

This gives a bound of at most $10+10+81+18\cdot 27=587$ mutual neighbours of $e$ and $g$ in either graph.
\end{proof}

In~\Cref{lem-mut-nbrs} (which is somewhat tedious and technical) we will specify how many mutual neighbours various vertices of $B_{0,0,0}$ have with the identity vertex $e$ in $\Gamma_1$ and in $\Gamma_2$. Since any group automorphism of $G$ fixes $e$, it must map a vertex that has $k$ mutual neighbours with $e$ to a vertex that has $k$ mutual neighbours with $e$, so this lemma will be critical in limiting the possible actions of our group automorphism. Before we get there, we first prove an easy lemma showing that as long as $g \in B_{0,0,0}$, the number of mutual neighbours of $g$ and $e$ is the same  in both graphs, so we don't have to calculate these values separately.

\begin{lem}\label{mut-nbrs-same}
Suppose $g \in B_{0,0,0}$. Then the number of mutual neighbours of $e$ and $g$ in $\Gamma_1$ is the same as the number of mutual neighbours of $e$ and $g$ in $\Gamma_2$.
\end{lem}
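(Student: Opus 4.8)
The plan is to deduce this directly from the isomorphism $\psi\colon \Gamma_1\to\Gamma_2$ constructed in the proof of \Cref{prop-iso}. Recall from that proof that $\psi$ fixes every block $B_{i,j,k}$ setwise and acts on $B_{i,j,k}$ as translation by some vector $\sum_{m=1}^5 c_m v_m$ whose coefficients $c_m$ depend only on $(i,j,k)$.

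The first step is to identify that translation on the block $B_{0,0,0}$. Reading off the defining formula for $\psi$, the vector $\psi(\mathbf v)-\mathbf v$ added to a point $\mathbf v=\sum_{i=1}^3 x_i w_i+\sum_{j=1}^5 y_j v_j$ is a linear combination of $v_1,\dots,v_5$ whose coefficients are monomials in $x_1,x_2,x_3$, each of which is a multiple of at least one $x_i$. On $B_{0,0,0}$ we have $x_1=x_2=x_3=0$, so this correction vanishes and $\psi$ restricts to the identity map on $B_{0,0,0}$. In particular $\psi(e)=e$, and since $g\in B_{0,0,0}$ by hypothesis, also $\psi(g)=g$.

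The conclusion is then immediate: $\psi$ is a graph isomorphism from $\Gamma_1$ to $\Gamma_2$, so it carries the set of mutual neighbours of $e$ and $g$ in $\Gamma_1$ bijectively onto the set of mutual neighbours of $\psi(e)$ and $\psi(g)$ in $\Gamma_2$; as $\psi(e)=e$ and $\psi(g)=g$, the two counts agree.

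There is no real obstacle here: the lemma is simply a convenient repackaging of the observation that $\psi$ is the identity on $B_{0,0,0}$, and the only thing that needs checking is that the correction term $\psi$ adds really does vanish when all the $w$-coordinates are zero, which is clear from the shape of the formula. The point of isolating this statement is to let the genuinely laborious count in \Cref{lem-mut-nbrs} be carried out in just one of the two graphs.
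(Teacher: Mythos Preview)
Your argument is correct. It is, however, a genuinely different route from the paper's. The paper does not invoke $\psi$ at all; instead it argues directly from the connection sets: since $T_{i,j,k}=S_{i,j,k}+m_{i,j,k}v_5$ for some $m_{i,j,k}\in\{0,1,2\}$, the map $h\mapsto h+m_{i,j,k}v_5$ (applied blockwise) carries $S_{i,j,k}\cap(S_{i,j,k}+g)$ onto $T_{i,j,k}\cap(T_{i,j,k}+g)$ whenever $g\in B_{0,0,0}$, giving an explicit bijection of mutual-neighbour sets. Your approach is cleaner, since once \Cref{prop-iso} is in hand the lemma really is a one-line consequence of $\psi$ fixing $B_{0,0,0}$ pointwise; the paper's approach, on the other hand, is self-contained and exposes the simple structural reason (a coset shift within each block) without appealing to the particular polynomial formula for $\psi$. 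Either way the content is the same: a block-preserving bijection that is the identity on $B_{0,0,0}$ transports mutual neighbours from $\Gamma_1$ to $\Gamma_2$.
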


\begin{proof}
For every $i,j,k$, we have $T_{i,j,k}=S_{i,j,k}+m_{i,j,k}v_5$ where $m_{i,j,k} \in \{0,1,2\}$. Now, $h$ is a mutual neighbour of $e$ and $g$ iff $h \in S_{i,j,k}\cap (S_{i,j,k}+g)$. This is true if and only if $h+m_{i,j,k}v_5 \in T_{i,j,k} \cap (T_{i,j,k}+g)$, which is true if and only if $h+m_{i,j,k}v_5$ is a mutual neighbour of $e$ and $g$ in $\Gamma_2$.
\end{proof}

\begin{lem}\label{lem-mut-nbrs}
The number of mutual neighbours that each vertex $v$ in the table below has with $e$ in each of $\Gamma_1$ and $\Gamma_2$, is as given in the corresponding column.

$$\begin{array}{c||c|c|c|c|c|c|c|c}
\textbf{v} &  v_1-v_5& v_2+v_3-v_4+v_5 & v_3-v_4 +v_5&v_4+v_5 &v_5 &  v_2+v_5&v_4&v_5-v_4   \\\hline
\textbf{\#} &865 & 163 & 487 & 811 &703&  702& 650 & 812 
\end{array}$$
$$\begin{array}{c||c|c|c|c|c|c}
\textbf{v} & v_1& -v_1-v_5 & v_3+v_4+v_5 & -v_3-v_5&v_2-v_3-v_4+v_5&-v_2+v_4-v_5\\\hline
\textbf{\#} & 380& 704 & 486 & 810&648&486
\end{array}$$
\end{lem}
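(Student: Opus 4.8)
The plan is to verify each of the fourteen entries by a direct, organized count of mutual neighbours of $e$ and the given vertex $v\in B_{0,0,0}$. By \Cref{mut-nbrs-same}, it suffices to do the count in $\Gamma_1$ only, so throughout we work with the connection set $S\cup S^{-1}$. A vertex $h$ is a mutual neighbour of $e$ and $v$ precisely when $h\in (S\cup S^{-1})$ and $h-v\in (S\cup S^{-1})$, i.e. $h\in (S\cup S^{-1})\cap \big(v+(S\cup S^{-1})\big)$. I would first observe that since $v\in B_{0,0,0}=\langle v_1,\dots,v_5\rangle$, translation by $v$ preserves every block $B_{i,j,k}$, so the count splits as a sum over the $27$ blocks: for each $(i,j,k)$ I count $\big|(S_{i,j,k}\cup S_{-i,-j,-k})\cap (v+S_{i,j,k}\cup v+S_{-i,-j,-k})\big|$, using the convention that $S_{i,j,k}$ is the part of $S\cup S^{-1}$ lying in $B_{i,j,k}$ (so for the six blocks with $\{i,j,k\}=\{0,1,2\}$ this set is empty and contributes $0$).

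The main bookkeeping is organized as follows. For the block $B_{0,0,0}$ one computes $|S_{0,0,0}^{\pm1}\cap (v+S_{0,0,0}^{\pm1})|$ directly from the five listed elements of $S_{0,0,0}$ and their negatives; this is a tiny finite check. For each of the remaining blocks one uses the explicit parametrisations of the sets $S_{i,j,k}$ given in \Cref{prop-iso}: each such set is an affine subspace of dimension $2$, $3$, or $4$ inside the $5$-dimensional block, described by linear conditions on the coordinates $(a,b,c,d,f)$ relative to the coset representative $iw_1+jw_2+kw_3$. Intersecting $S_{i,j,k}$ with $v+S_{i,j,k}$ (and the three other sign combinations) amounts to solving a small inhomogeneous linear system over $\mathbb Z_3$, whose solution set is either empty or an affine subspace of known dimension; its size is $0$ or a power of $3$. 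Summing the contributions of all $27$ blocks for a fixed $v$ gives the entry in the table; I would present the computation in a compact table listing, for each $v$ and each relevant block, the size of the intersection, so the reader can check the arithmetic. The fact that the same answer is obtained whether one uses $S_{i,j,k}$ or $S_{-i,-j,-k}$ for the neighbours of $e$ follows from the symmetry already noted in the proof of \Cref{lem-mut-nbrs-outside}.

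I expect the main obstacle to be purely the volume and care of the case analysis rather than any conceptual difficulty: there are $14$ target vertices and up to $21$ nonzero blocks each, and for each block one must correctly translate the defining linear relations of $S_{i,j,k}$ (including the delicate ones where the $v_5$-coordinate is a linear function of $a,b,c,d$, as in $S_{0,1,1}$, $S_{1,1,1}$, $S_{2,1,1}$, $S_{1,2,1}$, $S_{1,1,2}$) and then compute the rank of the resulting system over $\mathbb Z_3$. A secondary subtlety is handling the blocks $B_{i,j,k}$ and $B_{-i,-j,-k}$ consistently, since only one of each pair $S_{i,j,k}, S_{-i,-j,-k}$ is defined among the "$w$-heavy" sets (those with a $2$ in one coordinate), while for $0\le i,j,k\le 1$ both $S_{i,j,k}$ and its negative $S_{-i,-j,-k}=S_{(1-i'),\dots}$-type set may appear; I would fix once and for all a list of the $13$ actually-defined sets and their negatives before starting. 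To keep the verification tractable I would note two shortcuts: (i) for blocks where the $v$ in question cannot possibly lie in $S_{i,j,k}-S_{i,j,k}$ (e.g. because its $v_5$-coordinate is incompatible with the linear constraint), the contribution is immediately $0$; and (ii) contributions from a block and its negative are equal, halving the work. Assembling these per-block counts into the totals $865, 163, 487, 811, 703, 702, 650, 812$ and $380, 704, 486, 810, 648, 486$ then completes the proof.
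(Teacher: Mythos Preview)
Your plan is correct and is essentially the same approach as the paper's proof: reduce to $\Gamma_1$ via \Cref{mut-nbrs-same}, split the count over the blocks $B_{i,j,k}$, and for each nontrivial block determine whether $v$ lies in the linear part of the affine set $S_{i,j,k}$. One simplification the paper exploits (and which your ``solve a small linear system'' framing slightly obscures) is that for $(i,j,k)\neq(0,0,0)$ the set $(S\cup S^{-1})\cap B_{i,j,k}$ is a single coset of a subgroup of $B_{0,0,0}$, so $S_{i,j,k}\cap(v+S_{i,j,k})$ is always either all of $S_{i,j,k}$ or empty---there are no proper-subspace intersections to track, and the paper can therefore simply list, for each $v$, exactly which $S_{i,j,k}^{\pm1}$ survive and sum their sizes.
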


\begin{proof}
By~\Cref{mut-nbrs-same}, the number of mutual neighbours of $e$ with any vertex of $B_{0,0,0}$ is the same in $\Gamma_1$ as in $\Gamma_2$, so we need only count one of these. We will count the mutual neighbours in $\Gamma_1$.

\begin{itemize}
\item $v_1-v_5$: Mutual neighbours with $e$ are $v_5-v_1$; $S_{1,0,0}^{\pm 1}$; $S_{0,1,0}^{\pm1}$, $S_{1,1,0}^{\pm1}$, $S_{0,1,1}^{\pm1}$, $S_{1,1,1}^{\pm1}$, $S_{2,1,1}^{\pm1}$. The total number of these is $$1+54+81\cdot 10=865.$$ 

\item $v_2+v_3-v_4+v_5$: Mutual neighbours with $e$ are $-v_2-v_3+v_4-v_5$; $S_{0,0,1}^{\pm1}$. The total number of these is $$1+81\cdot 6=163.$$ 

\item $v_3-v_4+v_5$: Mutual neighbours with $e$ are $-v_3+v_4-v_5$; $S_{0,1,0}^{\pm1}$, $S_{0,0,1}^{\pm1}$, $S_{1,2,1}^{\pm1}$. The total number of these is $$1+81\cdot6=487.$$ vertices.

\item $v_4+v_5$: Mutual neighbours with $e$ are $-v_4-v_5$; $S_{0,1,0}^{\pm1}$, $S_{0,0,1}^{\pm1}$, $S_{1,0,1}^{\pm1}$, $S_{1,1,1}^{\pm1}$, $S_{1,2,1}^{\pm1}$. The total number of these is $$1+81\cdot 10=811.$$

\item $v_5$: Mutual neighbours with $e$ are $-v_5$; $S_{1,0,0}^{\pm1}$; $S_{0,1,0}^{\pm1}$, $S_{0,0,1}^{\pm1}$, $S_{1,1,0}^{\pm1}$, $S_{1,0,1}^{\pm1}$. The total number of these is $$1+54+81\cdot 8=703.$$

\item $v_2+v_5$: Mutual neighbours with $e$ are $S_{1,0,0}^{\pm1}$; $S_{0,0,1}^{\pm1}$, $S_{1,0,1}^{\pm1}$, $S_{1,2,1}^{\pm1}$, $S_{1,1,2}^{\pm1}$. The total number of these is $$54+81\cdot 8=702.$$   

\item $v_4$: Mutual neighbours with $e$ are $v_4+v_5$; $-v_5$; $S_{0,1,0}^{\pm1}$, $S_{0,0,1}^{\pm1}$, $S_{1,0,1}^{\pm1}$, $S_{0,1,1}^{\pm1}$. The total number of these is $$2+81\cdot 8 = 650.$$ 

\item $v_5-v_4$: Mutual neighbours with $e$ are $-v_5$; $-v_4-v_5$; $S_{0,1,0}^{\pm1}$, $S_{0,0,1}^{\pm1}$, $S_{1,0,1}^{\pm1}$, $S_{2,1,1}^{\pm1}$, $S_{1,1,2^{\pm1}}$. The total number of these is $$2+81\cdot 10=812.$$ 

\item $v_1$: Mutual neighbours with $e$ are $v_5$; $v_1-v_5$; $S_{1,0,0}^{\pm1}$; $S_{0,1,0}^{\pm1}$, $S_{1,1,0}^{\pm1}$. The total number of these is $$2+54+81\cdot 4=380.$$

\item $-v_1-v_5$: Mutual neighbours with $e$ are $v_5$; $v_5-v_1$; $S_{1,0,0}^{\pm1}$; $S_{0,1,0}^{\pm1}$, $S_{1,1,0}^{\pm1}$, $S_{1,2,1}^{\pm1}$, $S_{1,1,2}^{\pm1}$. The total number of these is $$2+54+81\cdot 8=704.$$ 

\item $v_3+v_4+v_5$: Mutual neighbours with $e$ are $S_{0,1,0}^{\pm1}$, $S_{0,0,1}^{\pm1}$, $S_{2,1,1}^{\pm1}$. The total number of these is $81\cdot 6=486$ vertices.

\item $-v_3-v_5$: Mutual neighbours with $e$ are $S_{0,1,0}^{\pm1}$, $S_{0,0,1}^{\pm1}$, $S_{1,1,0}^{\pm1}$, $S_{1,1,1}^{\pm1}$, $S_{1,1,2}^{\pm1}$. The total number of these is $$81\cdot 10=810.$$ 

\item $v_2-v_3-v_4+v_5$: Mutual neighbours with $e$ are $S_{0,0,1}^{\pm1}$, $S_{2,1,1}^{\pm1}$, $S_{1,2,1}^{\pm1}$, $S_{1,1,2}^{\pm1}$. The total number of these is $$81\cdot 8=648.$$ 

\item $-v_2+v_4-v_5$: Mutual neighbours with $e$ are $S_{0,0,1}^{\pm1}$, $S_{1,0,1}^{\pm1}$, $S_{1,1,1}^{\pm1}$. The total number of these is $$81\cdot 6=486.$$ 
\end{itemize}
\end{proof}

With this result in hand, we can show that a group automorphism acting as a graph isomorphism must either fix some of the elements of $S_{0,0,0}^{\pm1}$ pointwise, and have very small potential orbits on the other elements, or must have these properties after being combined with another group automorphism that acts as a graph automorphism of $\Gamma_1$.

\begin{lem}\label{aut-fixes-5}
If there is a group automorphism $\alpha$ of $G$ that maps $S\cup S^{-1}$ to $T\cup T^{-1}$, then there is one that fixes $v_5$ and $v_5^{-1}$. Moreover, any such automorphism fixes each of the sets $\{v_1-v_5,v_5-v_1\}$, $\{v_2+v_3-v_4+v_5,-v_2-v_3+v_4-v_5\}$, $\{v_3-v_4+v_5,-v_3+v_4-v_5\}$, and $\{v_4+v_5, -v_4-v_5\}$.
\end{lem}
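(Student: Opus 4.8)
## Proof Proposal

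The plan is to exploit \Cref{lem-mut-nbrs} together with the observation that any group automorphism fixes $e$, and hence must preserve the ``mutual-neighbour-count with $e$'' statistic. Let $\alpha$ be a group automorphism with $(S\cup S^{-1})^\alpha = T\cup T^{-1}$. The first step is to locate $v_5$: among the five elements of $S_{0,0,0}$, the element $v_5$ has mutual-neighbour count $703$ with $e$. I would scan the tables in \Cref{lem-mut-nbrs} to determine which elements of $T\cup T^{-1}$ — equivalently, which elements of $S\cup S^{-1}$, since the counts agree in $\Gamma_1$ and $\Gamma_2$ by \Cref{mut-nbrs-same} — have mutual-neighbour count exactly $703$ with $e$. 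By \Cref{lem-mut-nbrs-outside}, any element of $S\cup S^{-1}$ lying outside $B_{0,0,0}$ has at most $587$ mutual neighbours with $e$, so the image of $v_5$ must lie in $B_{0,0,0}$, i.e. in $S_{0,0,0}^{\pm 1}$. The key point is to check (tediously, but this is just reading off the first table and the fact that $\pm v_5 \in S_{0,0,0}^{\pm1}$ are the only candidates there with count $703$) that the only elements of $S_{0,0,0}^{\pm 1}$ with count $703$ are $v_5$ and $-v_5$ themselves. Hence $\alpha$ maps $\{v_5, -v_5\}$ to itself.

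If $\alpha$ already fixes $v_5$ (and hence $-v_5 = v_5^{-1}$), we are done with the first assertion. Otherwise $\alpha(v_5) = -v_5$; in that case I would compose $\alpha$ with the automorphism $\iota : \mathbf{v} \mapsto -\mathbf{v}$ (inversion), which is a group automorphism of the abelian group $G$ and which, since $T\cup T^{-1}$ is inverse-closed, acts as a graph automorphism of $\Gamma_2$ — wait, more precisely $\iota$ fixes every connection set that is inverse-closed, so $\iota\circ\alpha$ again maps $S\cup S^{-1}$ to $T\cup T^{-1}$ and now fixes $v_5$. Actually the cleanest route: $\iota$ is a graph automorphism of $\Gamma_1$ (since $S\cup S^{-1}$ is symmetric), so $\alpha \circ \iota$ works if we want to precompose, or $\iota\circ\alpha$ if we postcompose — either way we produce a group automorphism mapping $S\cup S^{-1}$ to $T\cup T^{-1}$ and fixing $v_5$ and $v_5^{-1}$. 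This handles the ``there is one that fixes $v_5$ and $v_5^{-1}$'' clause.

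For the ``moreover'' clause, let $\alpha$ now be any group automorphism fixing $v_5$ and $-v_5$ with $(S\cup S^{-1})^\alpha = T\cup T^{-1}$. Since $\alpha$ fixes $e$ and preserves mutual-neighbour counts with $e$, and since it must permute $S_{0,0,0}^{\pm 1}$ (being the $10$-element set $\{h : h \notin \pm v_5\text{-type exclusions}\}$ — more carefully, $\alpha$ maps $B_{0,0,0}$ to $B_{0,0,0}$ because that is forced by the promised action on blocks, or directly because the elements of $S_{0,0,0}^{\pm 1}$ are exactly the elements of $S\cup S^{-1}$ with mutual-neighbour count exceeding $587$ with $e$ once we note $\alpha$ fixes $\pm v_5$ which lie in $B_{0,0,0}$, and adjacency is preserved), $\alpha$ permutes $\{v_1-v_5,\ v_2+v_3-v_4+v_5,\ v_3-v_4+v_5,\ v_4+v_5,\ v_5\}^{\pm1}$. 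From the first table of \Cref{lem-mut-nbrs} the counts $865, 163, 487, 811$ attached to $v_1-v_5,\ v_2+v_3-v_4+v_5,\ v_3-v_4+v_5,\ v_4+v_5$ respectively are pairwise distinct and distinct from $703$; and each of these four elements has the same count as its inverse (the count statistic is invariant under $\iota$, which is a graph automorphism). Hence $\alpha$ must map each set $\{x, -x\}$, for $x$ in this list, to the unique pair $\{y,-y\}$ in $S_{0,0,0}^{\pm1}$ with the matching count — which is $\{x,-x\}$ itself, since the four counts $865,163,487,811$ are distinct. This yields the four fixed pairs as claimed.

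The main obstacle is purely bookkeeping: verifying that the candidate elements of $S\cup S^{-1}$ realizing each of the counts $703, 865, 163, 487, 811$ are exactly the six elements $\{\pm v_5, v_1-v_5, v_2+v_3-v_4+v_5, v_3-v_4+v_5, v_4+v_5\}$ and their inverses, with no element outside $S_{0,0,0}^{\pm1}$ matching (handled by \Cref{lem-mut-nbrs-outside}, since $587 < 703$) and no \emph{other} element of $S_{0,0,0}^{\pm1}$ matching (which requires knowing the counts for all $10$ elements of $S_{0,0,0}^{\pm1}$, i.e. the five listed elements and their negatives — the negatives having the same counts by $\iota$-invariance). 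The subtlety worth care is that $S_{0,0,0}$ is not symmetric, so $S_{0,0,0}^{\pm1}$ has $10$ elements, and one must be sure that, e.g., $-v_5 \notin S_{0,0,0}$ but $-v_5 \in S_{0,0,0}^{-1}$, and that its count also equals $703$; this is what lets $\alpha$ possibly swap $v_5$ with $-v_5$ and is exactly why the composition-with-$\iota$ step is needed.
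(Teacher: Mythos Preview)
Your argument for the first clause (locating $\{v_5,-v_5\}$ and composing with inversion if necessary) is fine. The genuine gap is in the ``moreover'' clause: you assert that $\alpha$ permutes $S_{0,0,0}^{\pm 1}$, but neither justification you offer works. Your first justification (``forced by the promised action on blocks'') is circular --- \Cref{V-blocks} applies only to automorphisms already known to fix $B_{0,0,0}$ pointwise, which is exactly what this lemma is building toward. Your second justification (``the elements of $S_{0,0,0}^{\pm 1}$ are exactly the elements of $S\cup S^{-1}$ with mutual-neighbour count exceeding $587$'') is simply false: from the table in \Cref{lem-mut-nbrs}, the elements $v_2+v_3-v_4+v_5$ and $v_3-v_4+v_5$ of $S_{0,0,0}$ have counts $163$ and $487$ respectively, both below $587$. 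So \Cref{lem-mut-nbrs-outside} does \emph{not} force their images to lie in $B_{0,0,0}$, and you have no control over where $\alpha$ sends them.

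The paper repairs this with an idea you are missing: instead of working only with the five elements of $S_{0,0,0}$, it uses the auxiliary elements $v_2+v_5$ (count $702$) and $-v_3-v_5$ (count $810$), which are \emph{not} in $S_{0,0,0}$ but do lie in $B_{0,0,0}$ and do have counts exceeding $587$. The five elements $v_1-v_5,\ v_2+v_5,\ -v_3-v_5,\ v_4+v_5,\ v_5$ all have high counts, so each maps into $B_{0,0,0}$; and since they generate $B_{0,0,0}$ as a subgroup, this forces $B_{0,0,0}^\alpha = B_{0,0,0}$. Only then can one intersect with the connection set to conclude $(S_{0,0,0}^{\pm1})^\alpha = S_{0,0,0}^{\pm1}$, after which your distinct-counts argument finishes the job.
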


\begin{proof}
Any group automorphism fixes $e$. A group automorphism $\alpha$ that maps $S\cup S^{-1}$ to $T\cup T^{-1}$ must map any vertex that has a certain number of mutual neighbours with $e$ in $\Gamma_1$, to a vertex that has that number of mutual neighbours with $e$ in $\Gamma_2$.

By~\Cref{lem-mut-nbrs}, the vertices $v_1-v_5$, $v_2+v_5$, $-v_3-v_5$, $v_4+v_5$, and $v_5$ each has more than $587$ mutual neighbours with $e$, so by~\Cref{lem-mut-nbrs-outside} each of these vertices must map to a vertex in $B_{0,0,0}$, and must be the image of a vertex in $B_{0,0,0}$. Note that $B_{0,0,0}$ is actually a subgroup of $G$, and is generated by these five elements. This implies that $\alpha$ must map this generating set for $B_{0,0,0}$ to a generating set for $B_{0,0,0}$, so $B_{0,0,0}^\alpha=B_{0,0,0}$.

For convenience in this paragraph, let $S_0=S_{0,0,0}\cup S_{0,0,0}^{-1}$ and $T_0=T_{0,0,0}\cup T_{0,0,0}^{-1}$. Since $$(S\cup S^{-1})\cap B_{0,0,0}=S_0=T_0=(T\cup T^{-1}) \cap B_{0,0,0}$$ and $B_{0,0,0}^\alpha=B_{0,0,0}$,
we must have $S_0^\alpha=T_0=S_0$.
In particular, since the numbers of mutual neighbours that each vertex in each inverse-closed pair in $S_0$ has with $e$ is distinct from the number of mutual neighbours that each vertex of any other inverse-closed pair in $S_0$ has with $e$ (see~\Cref{lem-mut-nbrs}), each inverse-closed pair in $S_0$ must be mapped to the same inverse-closed pair in $S_0$. This completes the proof unless $v_5^\alpha=v_5^{-1}$ (in which case $(v_5^{-1})^\alpha=v_5$), which we now assume.

Let $\sigma$ be the automorphism of $G$ that inverts every element of $G$; note that $\sigma$ fixes $S\cup S^{-1}$, and therefore $\sigma\alpha$ maps $S\cup S^{-1}$ to $T\cup T^{-1}$. Now $v_5^{\sigma\alpha}=v_5$, and $\sigma\alpha$ also fixes each of the other inverse-closed pairs separately since $\alpha$ did, completing the proof.
\end{proof}

In fact, we can now show that a group automorphism acting as a graph isomorphism must fix every element of $S_{0,0,0}^{\pm1}$ pointwise (possibly after being combined in the preceding lemma with another group automorphism that acts as a graph automorphism).

\begin{lem}\label{aut-fixes-B_0}
If there is a group automorphism $\alpha$ of $G$ that maps $S\cup S^{-1}$ to $T\cup T^{-1}$ and fixes $v_5$ and $v_5^{-1}$, then it must also fix every other vertex of $S_{0,0,0}\cup S_{0,0,0}^{-1}$.
\end{lem}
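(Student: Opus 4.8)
The plan is to leverage Lemma~\ref{aut-fixes-5} together with the mutual-neighbour counts of Lemma~\ref{lem-mut-nbrs} to pin down the action of $\alpha$ on each inverse-closed pair inside $S_{0,0,0}^{\pm1}$, and then show that within each such pair, $\alpha$ cannot swap the two elements. Since $\alpha$ fixes $v_5$ and $v_5^{-1}$, Lemma~\ref{aut-fixes-5} already tells us that $\alpha$ fixes (setwise) each of the pairs $\{v_1-v_5,v_5-v_1\}$, $\{v_2+v_3-v_4+v_5,-v_2-v_3+v_4-v_5\}$, $\{v_3-v_4+v_5,-v_3+v_4-v_5\}$, $\{v_4+v_5,-v_4-v_5\}$, and fixes $v_5$. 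So the only question is whether $\alpha$ might, say, send $v_1-v_5$ to $v_5-v_1$ while fixing $v_5$.

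First I would observe that $\alpha$ is determined on $B_{0,0,0}$ by its action on the generating set $\{v_1-v_5, v_2+v_3-v_4+v_5, v_3-v_4+v_5, v_4+v_5, v_5\}$, and since $\alpha$ fixes $v_5$ and permutes the other four generators among their $\pm$ partners, there are only finitely many ($2^4 = 16$) candidate restrictions $\alpha|_{B_{0,0,0}}$. For each non-identity candidate, I would exhibit some vertex $v \in B_{0,0,0}$ listed in Lemma~\ref{lem-mut-nbrs} whose image under that candidate is another vertex in the table (or can be computed to be such) with a \emph{different} mutual-neighbour count with $e$ — contradicting the fact that $\alpha$ preserves mutual-neighbour counts with $e$ (using Lemma~\ref{mut-nbrs-same} so the count is the same in $\Gamma_1$ and $\Gamma_2$). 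For instance, if $\alpha$ fixes $v_5$ but sends $v_1 - v_5 \mapsto v_5 - v_1$, then since $v_1 = (v_1-v_5)+v_5$ we get $v_1^\alpha = (v_5-v_1)+v_5 = -v_1-v_5$; but the table gives $v_1$ a count of $380$ and $-v_1-v_5$ a count of $704$, a contradiction. The remaining cases are handled similarly: each choice of which generators get inverted forces the image of some convenient short combination (e.g. $v_4$, $v_2+v_5$, $v_3+v_4+v_5$, $v_2-v_3-v_4+v_5$, or one of the generators themselves) to be a table entry with the wrong count.

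The main obstacle — really the only substantive work — is organizing this finite case analysis efficiently: with $16$ candidate restrictions and the need, for each, to name a witnessing vertex and verify via Lemma~\ref{lem-mut-nbrs} that the count changes. I would structure it by noting that the four ``invertible'' generators fall into groups by their own mutual-neighbour counts ($865$, $163$, $487$, $811$), and that inverting any single one of them while fixing $v_5$ already moves some adjacent vertex; a short lemma-internal table or a case split on ``which subset of the four generators is inverted'' suffices. One convenient uniform trick: for each generator $g \in S_{0,0,0}$ other than $v_5$, the element $g + v_5$ (or $g - v_5$, chosen to land among the tabulated vertices) has a count distinct from the count of its image $g^\alpha + v_5$ whenever $g^\alpha = -g$, because the asymmetry of the connection set $S$ in the $v_5$-coordinate makes these counts sensitive to sign. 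Carrying this out shows every candidate but the identity is impossible, so $\alpha$ fixes every element of $S_{0,0,0}\cup S_{0,0,0}^{-1}$ pointwise, as claimed.
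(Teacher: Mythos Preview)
Your approach is essentially the paper's: use the mutual-neighbour counts from Lemma~\ref{lem-mut-nbrs} (together with Lemma~\ref{mut-nbrs-same}) to rule out, for each generator $g\in S_{0,0,0}\setminus\{v_5\}$, the possibility $g^\alpha=-g$. Your worked example for $v_1-v_5$ is exactly the paper's argument for that generator, rephrased.

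The one point worth flagging is organizational. The paper does not run a $16$-case analysis; it proceeds \emph{sequentially}: first $v_4+v_5$ and $v_1-v_5$ are pinned down using only the fixed $\pm v_5$ (via the pairs $(v_4,\,v_5-v_4)$ and $(v_1,\,-v_1-v_5)$, just as you do), but then $v_3-v_4+v_5$ is handled using the \emph{now-fixed} $v_4$ (via $(v_3+v_4+v_5,\,-v_3-v_5)$), and $v_2+v_3-v_4+v_5$ using the now-fixed $-v_3$ (via $(v_2-v_3-v_4+v_5,\,-v_2+v_4-v_5)$). Your proposed ``uniform trick'' of testing $g\pm v_5$ only works against the table for the first two generators; for $g=v_3-v_4+v_5$ and $g=v_2+v_3-v_4+v_5$, neither $g\pm v_5$ nor its would-be image $-g\pm v_5$ appears among the tabulated vertices, so you are forced into exactly the sequential pivot the paper uses (and indeed the witnesses you list, $v_3+v_4+v_5$ and $v_2-v_3-v_4+v_5$, require knowing that $v_4$ and $v_3$ are already fixed to compute their images). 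This is not a gap in correctness, but it means your $16$-case framing collapses to the paper's four-step chain once you try to write it down.
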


\begin{proof}
We first show that it fixes $v_4+v_5$ and its inverse.

Any such $\alpha$ induces a graph automorphism from $\Gamma_1$ to $\Gamma_2$. Since $\Gamma_1$ is a Cayley graph, the number of mutual neighbours of $v_5$ with $v_4+v_5$ is the same as the number of mutual neighbours of $e$ with $v_4$. Likewise, the number of mutual neighbours of $v_5$ with $-v_4-v_5$ is the same as the number of mutual neighbours of $e$ with $v_5-v_4$. Since we see from~\Cref{lem-mut-nbrs} that these numbers are not equal, $\alpha$ must fix both $v_4+v_5$ and $-v_4-v_5$.

Next we show that $v_1-v_5$ and its inverse are fixed.

Since $\Gamma_1$ is a Cayley graph, the number of mutual neighbours of $-v_5$ with $v_1-v_5$ is the same as the number of mutual neighbours of $e$ with $v_1$. Likewise, the number of mutual neighbours of $-v_5$ with $v_5-v_1$ is the same as the number of mutual neighbours of $e$ with $-v_1-v_5$. Since we see from~\Cref{lem-mut-nbrs} that these numbers are not equal, $\alpha$ must fix both $v_1-v_5$ and $v_5-v_1$.

Now we show that $v_3-v_4+v_5$ and its inverse are fixed. At this point, notice that since $\alpha$ fixes $v_5$ and $v_4+v_5$, it also fixes $v_4$. 

Since $\Gamma_1$ is a Cayley graph, the number of mutual neighbours of $v_4$ with $v_3-v_4+v_5$ is the same as the number of mutual neighbours of $e$ with $v_3+v_4+v_5$. Likewise, the number of mutual neighbours of $v_4$ with $-v_3+v_4-v_5$ is the same as the number of mutual neighbours of $e$ with $-v_3-v_5$. Since we see from~\Cref{lem-mut-nbrs} that these numbers are not equal, $\alpha$ must fix both $v_3-v_4+v_5$ and $-v_3+v_4-v_5$.

Finally, we must show that $v_2+v_3-v_4+v_5$ and its inverse are fixed. At this point, notice that since $\alpha$ fixes $v_4$, $v_5$, and $v_3-v_4+v_5$, it also fixes $-v_3$.

Since $\Gamma_1$ is a Cayley graph, the number of mutual neighbours of $-v_3$ with $v_2+v_3-v_4+v_5$ is the same as the number of mutual neighbours of $e$ with $v_2-v_3-v_4+v_5$. Likewise, the number of mutual neighbours of $-v_3$ with $-v_2-v_3+v_4-v_5$ is the same as the number of mutual neighbours of $e$ with $-v_2+v_4-v_5$. Since we see from~\Cref{lem-mut-nbrs} that these numbers are not equal, $\alpha$ must fix both $v_2+v_3-v_4+v_5$ and $-v_2-v_3+v_4-v_5$.
\end{proof}

At this point we have enough information to show the invariant action on the partition $\{B_{i,j,k}\}$, as mentioned earlier.

\begin{lem}\label{V-blocks}
Any group automorphism $\alpha$ of $G$ that fixes every vertex of $B_{0,0,0}$ has an invariant action on the collection $\{B_{i,j,k}:0 \le i,j,k \le 2\}$.
\end{lem}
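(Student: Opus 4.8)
The plan is to treat \Cref{V-blocks} as an instance of a general fact about automorphisms stabilising a subgroup, once the right subgroup is identified. The first step is to observe that $B_{0,0,0}$ is not just a block of the partition but is the subgroup $V=\langle v_1,v_2,v_3,v_4,v_5\rangle$ of $G$, and that the whole collection $\{B_{i,j,k}:0\le i,j,k\le 2\}$ is precisely the set of $27$ cosets of $V$ in $G$, with $B_{i,j,k}=iw_1+jw_2+kw_3+V$. Under the canonical identification $G/V\cong\mathbb Z_3^3$, the triple $(i,j,k)$ is simply the name of the coset $B_{i,j,k}$.

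Next I would invoke the hypothesis. Since $\alpha$ fixes every element of $B_{0,0,0}=V$, it in particular maps $V$ onto $V$, so $\alpha$ stabilises $V$ setwise and hence induces a group automorphism $\bar\alpha$ of the quotient $G/V$. For any $x\in G$ this gives $\alpha(x+V)=\alpha(x)+\alpha(V)=\alpha(x)+V$, again a single coset of $V$; equivalently, $\alpha$ carries the block named by $(i,j,k)$ to the block named by $\bar\alpha$ applied to $(i,j,k)$. Since the $B_{i,j,k}$ are exactly these cosets, we get $B_{i,j,k}^\alpha=B_{i',j',k'}$ with $(i',j',k')$ depending only on $(i,j,k)$ (indeed $\mathbb Z_3$-linearly, since $\bar\alpha$ is an automorphism of $\mathbb Z_3^3$), which is the claimed invariant action on the collection $\{B_{i,j,k}\}$.

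I do not expect any real obstacle here: all the content sits in the first step, namely the elementary observation that $B_{0,0,0}$ is a subgroup whose cosets are the remaining blocks. It is worth noting that the hypothesis is stronger than necessary — $B_{0,0,0}^\alpha=B_{0,0,0}$ alone would already force the conclusion — but the pointwise‑fixing hypothesis is exactly what is delivered by \Cref{aut-fixes-B_0}, because $\alpha$ is a homomorphism and the five elements of $S_{0,0,0}$ generate $B_{0,0,0}$. So stating the lemma with this hypothesis is the natural way to hand off to the next stage of the argument.
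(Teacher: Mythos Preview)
Your proof is correct and is essentially the same argument as the paper's, just phrased more abstractly: the paper writes out explicitly that $g=iw_1+jw_2+kw_3+h$ with $h\in B_{0,0,0}$ is sent to $iw_1^\alpha+jw_2^\alpha+kw_3^\alpha+h$, which is your coset observation $\alpha(x+V)=\alpha(x)+V$ made concrete on generators. Your remark that setwise invariance of $B_{0,0,0}$ would already suffice is accurate and a nice aside.
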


\begin{proof}
Since $\alpha$ is an automorphism of $G$, once we know its action on a generating set for $G$, this completely determines its action. Since $\alpha$ fixes every vertex of $B_{0,0,0}$, it fixes each $v_i$ for $1\le i \le 5$. This means that if $g \in B_{i,j,k}$ with $g=iw_1+jw_2+kw+3+h$ then $g^\alpha=iw_1^\alpha+jw_2^\alpha+kw_3^\alpha+h$ since $h \in B_{0,0,0}$. So $B_{i,j,k}^\alpha$ has the form $B_{i',j',k'}$ for some $0 \le i',j',k' \le 2$.
\end{proof}

In fact, the invariant action on the partition can have only two possible forms.

\begin{lem}\label{W-action}
If there is a group automorphism $\alpha$ of $G$ that maps $S\cup S^{-1}$ to $T\cup T^{-1}$ and fixes every vertex of  $B_{0,0,0}$, then it must either leave every $B_{i,j,k}$ invariant, or map every $B_{i,j,k}$ to $B_{-i,-j,-k}$.
\end{lem}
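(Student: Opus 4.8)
The plan is to work with the linear map $\bar\alpha$ on $G/B_{0,0,0}\cong\mathbb{Z}_3^3$ that $\alpha$ induces (this exists by \Cref{V-blocks}); identifying $\mathbb{Z}_3^3$ with the triples $(i,j,k)$ indexing the blocks, $\bar\alpha$ is exactly the permutation of the blocks realized by $\alpha$, and the goal is to show $\bar\alpha=I$ or $\bar\alpha=-I$. The crucial observation is that, because $\alpha$ fixes $B_{0,0,0}$ pointwise, its restriction to each block $B_{i,j,k}$ is a translation by an element of $B_{0,0,0}$ once we identify that block with $B_{0,0,0}$ via the obvious coset bijection. Consequently $\alpha$, viewed as an isomorphism $\Gamma_1\to\Gamma_2$ fixing $e$, carries the ``shape'' of the $\Gamma_1$-neighbourhood of $e$ inside $B_{i,j,k}$ — namely the subset of $B_{0,0,0}$ obtained by translating $(S\cup S^{-1})\cap B_{i,j,k}$ back to the origin — to a translate of itself, and this translate must equal the corresponding shape of the $\Gamma_2$-neighbourhood of $e$ inside $B_{\bar\alpha(i,j,k)}$.

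Next I would record what these shapes are. For $(i,j,k)\notin\{(0,0,0)\}\cup P$, where $P$ is the set of six permutations of $(0,1,2)$ (the blocks in which $e$ has no neighbour), a direct reading of the definitions shows the $\Gamma_1$-shape in $B_{i,j,k}$ is a single linear subspace $V_{i,j,k}$ of $B_{0,0,0}$: it is the $3$-dimensional subspace $\langle v_1,v_2,v_5\rangle$ when $(i,j,k)\in\{(1,0,0),(2,0,0)\}$, and one of nine explicit $4$-dimensional subspaces otherwise. Moreover $V_{i,j,k}=V_{-i,-j,-k}$ (inverse-closure only negates the shape, which does not change a subspace), and since each $T_{i,j,k}$ differs from $S_{i,j,k}$ only by a translation by a multiple of $v_5$, the underlying subspace of the $\Gamma_2$-shape in every block $B_{i',j',k'}$ is again $V_{i',j',k'}$. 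Combining this with the first paragraph gives $V_{i,j,k}=V_{\bar\alpha(i,j,k)}$ for all $(i,j,k)\notin\{(0,0,0)\}\cup P$, because a coset of one subspace equals a coset of another only when the two subspaces coincide.

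The decisive step is then to verify that these ten subspaces are pairwise distinct — equivalently, that the nine $4$-dimensional ones are cut out by pairwise non-proportional linear functionals on $B_{0,0,0}$. Granting this, the only two blocks sharing a prescribed shape-subspace are $B_{i,j,k}$ and $B_{-i,-j,-k}$, so $\bar\alpha$ fixes each line $\langle(i,j,k)\rangle$ with $(i,j,k)\notin\{(0,0,0)\}\cup P$. In particular it fixes the three coordinate lines $\langle w_1\rangle,\langle w_2\rangle,\langle w_3\rangle$, hence $\bar\alpha=\mathrm{diag}(\epsilon_1,\epsilon_2,\epsilon_3)$ with each $\epsilon_m\in\{1,-1\}$; and since it also fixes $\langle w_1+w_2\rangle$, $\langle w_1+w_3\rangle$, $\langle w_2+w_3\rangle$ (the triples $(1,1,0),(1,0,1),(0,1,1)$ all lie outside $P$), we get $\epsilon_1=\epsilon_2=\epsilon_3$. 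Therefore $\bar\alpha=I$ or $\bar\alpha=-I$, which are precisely the two asserted possibilities.

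I expect the only real work to be bookkeeping: extracting the ten subspaces $V_{i,j,k}$ from the definitions of the $S_{i,j,k}$ and checking they are distinct, while keeping track of inverse-closure and of the harmless $v_5$-shifts that separate $T$ from $S$. No genuinely new idea beyond ``$\alpha$ acts on each block by translation'' seems to be required, so the main obstacle is simply doing this finite linear-algebra check cleanly.
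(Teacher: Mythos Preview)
Your argument is correct and rests on the same key observation as the paper's—that because $\alpha$ fixes $B_{0,0,0}$ pointwise, its restriction to each block is a translation, so the underlying subspace $V_{i,j,k}\le B_{0,0,0}$ of $(S\cup S^{-1})\cap B_{i,j,k}$ is an $\alpha$-invariant of the block—but you execute it more uniformly. The paper phrases this invariant graph-theoretically (as the set of $g\in B_{0,0,0}$ for which every neighbour of $e$ in the block is also a neighbour of $g$), computes it only for the four pairs $B_{\pm1,0,0}$, $B_{0,\pm1,0}$, $B_{0,0,\pm1}$, $B_{\pm(1,1,1)}$, and then uses the last of these together with linearity to force the three signs $\epsilon_m$ to coincide. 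You instead extract all ten subspaces, verify via their defining functionals that they are pairwise distinct, conclude that $\bar\alpha$ fixes every line of $\mathbb{Z}_3^3$ outside $P$, and read off $\bar\alpha=\pm I$ from the coordinate lines and the lines $\langle w_m+w_n\rangle$. Your route costs a little more bookkeeping but is conceptually cleaner and more robust: in particular it does not depend on the paper's description of the $(1,1,1)$-invariant as $\langle v_1,v_2,v_3,v_4\rangle$, which is misstated—one computes $V_{1,1,1}=\{y:y_1+y_2-y_3-y_4+y_5=0\}$, so for instance $w_1+w_2+w_3\in S_{1,1,1}$ is \emph{not} a neighbour of $v_1$. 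Your explicit enumeration of the functionals handles this point correctly and still yields the needed sign-coherence via the $(1,1,0)$, $(1,0,1)$, $(0,1,1)$ lines.
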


\begin{proof}
By~\Cref{V-blocks}, $\alpha$ must have an invariant action on the collection $\{B_{i,j,k}:0 \le i,j,k \le 2\}$.

The sets $B_{1,0,0}$ and $B_{-1,0,0}$ are the only sets that have exactly $27$ neighbours of $e$ in both $\Gamma_1$ and $\Gamma_2$, so must be left invariant by $\alpha$.

The sets $B_{0,1,0}$ and $B_{0,-1,0}$ are the only sets for which all neighbours of $e$ in both $\Gamma_1$ and $\Gamma_2$ are also neighbours of $iv_1+jv_3+kv_4+\ell v_5$ for every $0 \le i,j,k,\ell \le 2$, so these sets must be left invariant by $\alpha$.

Similarly, the sets $B_{0,0,1}$ and $B_{0,0,-1}$ are the only sets for which all neighbours of $e$ in both $\Gamma_1$ and $\Gamma_2$ are also neighbours of $iv_2+jv_3+kv_4+\ell v_5$ for every $0 \le i,j,k,\ell \le 2$, so these sets must be left invariant by $\alpha$.

Likewise, the sets $B_{1,1,1}$ and $B_{-1,-1,-1}$ are the only sets for which (in both $\Gamma_1$ and $\Gamma_2$) all neighbours of $e$ are also neighbours of $iv_1+jv_2+kv_3+\ell v_4$ for every $0 \le i,j,k,\ell \le 2$. Notice this is true even though $S_{1,1,1}\neq T_{1,1,1}$. So these sets must be left invariant by $\alpha$.

Taken together, these force $w_1^\alpha \in B_{\pm1,0,0}$; $w_2^\alpha  \in B_{0,\pm1,0}$; and $w_3^\alpha  \in B_{0,0,\pm1}$. However, the previous paragraph also tells us that $(w_1+w_2+w_3)^\alpha \in B_{1,1,1}\cup B_{-1,-1,-1}$. This forces the signs of each of the $\pm$ in our first three conclusions to be the same, yielding the desired conclusion.
\end{proof}

We can even show that after possibly combining with a group automorphism that acts as a graph automorphism, the invariant action fixes every set in the partition.

\begin{lem}\label{aut-Bs-invariant}
If there is a group automorphism $\alpha$ of $G$ that maps $S\cup S^{-1}$ to $T\cup T^{-1}$ and fixes every vertex of  $B_{0,0,0}$, then there is one that leaves every $B_{i,j,k}$ invariant.
\end{lem}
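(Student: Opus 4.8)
The plan is to start with the automorphism $\alpha$ supplied by the hypothesis, apply Lemma~\ref{W-action} to reduce to two cases, and in the bad case "fix it up" by composing with a graph automorphism of $\Gamma_1$ that inverts the $w_i$-directions while fixing $B_{0,0,0}$ pointwise. By Lemma~\ref{W-action}, $\alpha$ either leaves every $B_{i,j,k}$ invariant — in which case we are done and take the automorphism to be $\alpha$ itself — or it maps every $B_{i,j,k}$ to $B_{-i,-j,-k}$. So assume we are in the latter case.

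In that case I want to produce a group automorphism $\beta$ of $G$ with two properties: (a) $\beta$ fixes every vertex of $B_{0,0,0}$, and (b) $\beta$ induces a graph automorphism of $\Gamma_1$, i.e. $(S\cup S^{-1})^\beta = S\cup S^{-1}$; and moreover $\beta$ should map every $B_{i,j,k}$ to $B_{-i,-j,-k}$. Given such a $\beta$, the composite $\beta\alpha$ maps $S\cup S^{-1}$ to $(S\cup S^{-1})^\alpha = T\cup T^{-1}$ (using that $\beta$ fixes $S\cup S^{-1}$ setwise), fixes every vertex of $B_{0,0,0}$ (since both $\beta$ and $\alpha$ do), and leaves every $B_{i,j,k}$ invariant (since $\beta$ and $\alpha$ both invert the block labels, and the composition of two sign-inversions is the identity on labels). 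That is exactly the conclusion we want. So the whole lemma comes down to exhibiting this single $\beta$.

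The natural candidate for $\beta$ is the map that fixes $v_1,\dots,v_5$ and sends $w_m \mapsto -w_m$ for $m=1,2,3$; this is visibly a group automorphism of $\mathbb Z_3^8$, it fixes $B_{0,0,0}$ pointwise, and it sends $B_{i,j,k}$ to $B_{-i,-j,-k}$. The content to verify is condition (b): that $\beta$ preserves $S\cup S^{-1}$. One checks this block by block. For the "diagonal" blocks $S_{2,1,1},S_{1,2,1},S_{1,1,2}$ and for each $S_{i,j,k}$ with $0\le i,j,k\le 1$, applying $\beta$ negates the $w$-part and fixes the $v$-part, so $S_{i,j,k}^\beta$ lands in $B_{-i,-j,-k}$; one must identify which connection-set block that is and confirm it equals $S_{-i,-j,-k}$ or $S_{i,j,k}^{-1}$ (whichever is defined), i.e. that $\beta$ restricted to that block agrees with inversion up to relabelling. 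Concretely, for the all-zero $v$-coefficient choices this is immediate, and for general coefficients $a,b,c,d$ one reads off from the displayed formulas that the parametrization of $S_{-i,-j,-k}^{\pm1}$ is obtained from that of $S_{i,j,k}$ by negating the $w$-coefficients and suitably renaming the free parameters $a,b,c,d$ — this is the routine but slightly fiddly part, essentially the same bookkeeping already used implicitly in Lemma~\ref{lem-mut-nbrs-outside}. (The block $S_{0,0,0}$ is fixed pointwise by $\beta$, so causes no trouble, and the identity-block edges coming from $S_{0,0,0}^{\pm1}$ are preserved.)

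The main obstacle is precisely this last verification that $\beta$ preserves $S\cup S^{-1}$: since $S_{1,1,1}\ne T_{1,1,1}$ and the $v_5$-coefficients in the diagonal blocks are tied to $a,b,c,d$ in an asymmetric-looking way, one has to be careful that negating the $w$'s really does carry each $S_{i,j,k}$ onto the correctly-signed partner block and not merely into the right $B$-block. I expect this reduces to checking that the defining linear forms in the exponents of $S$ are invariant under the simultaneous substitution $(w_1,w_2,w_3)\mapsto(-w_1,-w_2,-w_3)$ together with a permutation/negation of the parameters, which can be confirmed directly from the eleven displayed formulas for the $S_{i,j,k}$. Once $\beta$ is in hand, the composition argument of the second paragraph finishes the proof immediately.
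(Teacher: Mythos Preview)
Your proposal is correct and follows essentially the same route as the paper: invoke Lemma~\ref{W-action}, and in the case where $\alpha$ inverts the block labels, precompose with the automorphism $\beta=\sigma_W$ that fixes the $v_i$ and negates the $w_i$. The paper asserts (a bit loosely) that $\sigma_W$ fixes $S$, whereas you more accurately note that what must be checked is $(S\cup S^{-1})^\beta=S\cup S^{-1}$ block by block; this is indeed the only nontrivial verification, and your sketch of it is adequate.
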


\begin{proof}
By~\Cref{W-action}, the only other possibility is that $\alpha$ maps every $B_{i,j,k}$ to $B_{-i,-j,-k}$.

Let $\sigma_W$ be the automorphism of $G$ determined by $w_i^{\sigma_W}=w_i^{-1}$ for every $1 \le i \le 3$, and $v_i^{\sigma_W}=v_i$ for every $1 \le i \le 5$. Note that $\sigma_W$ fixes $S$, and therefore $\sigma_W\alpha$ maps $S$ to $T$. Also $\sigma_W$ fixes every vertex of $B_{0,0,0}$ and maps every $B_{i,j,k}$ to $B_{-i,-j,-k}$. Therefore $\sigma_W\alpha$ has all of the properties we claimed.
\end{proof}

In our final lemma, we show that a group automorphism that has all of the properties we have been deducing, cannot exist.

\begin{lem}\label{lem-no-aut}
A group automorphism $\alpha$ of $G$ that maps $S\cup S^{-1}$ to $T\cup T^{-1}$, fixes every vertex of  $B_{0,0,0}$, and  leaves every $B_{i,j,k}$ invariant must map $(S\setminus S_{0,0,0})\cup S'_{0,0,0} $ to $(T \setminus T_{0,0,0})\cup S'_{0,0,0}$ for any $S'_{0,0,0} \subseteq B_{0,0,0}$. In fact, there is no such group automorphism.
\end{lem}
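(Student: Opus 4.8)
The plan is to prove the two assertions in turn, the first by a block-localization argument and the second by reducing to Spiga's theorem. For the first assertion, I would start from the observation that since $\alpha$ is a bijection of $G$ that fixes each block $B_{i,j,k}$ setwise, for every index triple $(i,j,k)$ it carries $(S\cup S^{-1})\cap B_{i,j,k}$ onto $(T\cup T^{-1})\cap B_{i,j,k}$. The bookkeeping point to check is that among the eleven index triples occurring in the definition of $S$ — namely $(0,0,0)$ together with $(1,0,0),(0,1,0),(0,0,1),(1,1,0),(1,0,1),(0,1,1),(1,1,1),(2,1,1),(1,2,1),(1,1,2)$ — the negative (mod $3$) of each nonzero one is absent from the list. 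Consequently, for each nonzero index triple $(i,j,k)$ that occurs we get $(S\cup S^{-1})\cap B_{i,j,k}=S_{i,j,k}$ and $(T\cup T^{-1})\cap B_{i,j,k}=T_{i,j,k}$, with no contribution from inverses, so $\alpha(S_{i,j,k})=T_{i,j,k}$. Taking the union over the ten nonzero triples yields $\alpha(S\setminus S_{0,0,0})=T\setminus T_{0,0,0}$. Finally, since $S'_{0,0,0}\subseteq B_{0,0,0}$ and $\alpha$ fixes every vertex of $B_{0,0,0}$, it fixes $S'_{0,0,0}$ pointwise; and since $S\setminus S_{0,0,0}$ and $T\setminus T_{0,0,0}$ are disjoint from $B_{0,0,0}$, adding $S'_{0,0,0}$ to both sides gives $\alpha\bigl((S\setminus S_{0,0,0})\cup S'_{0,0,0}\bigr)=(T\setminus T_{0,0,0})\cup S'_{0,0,0}$.

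For the second assertion I would specialize $S'_{0,0,0}=\{v_1,v_3,v_4,v_5\}$, which indeed lies in $B_{0,0,0}=\langle v_1,\dots,v_5\rangle$. If a group automorphism $\alpha$ with the three listed properties existed, then by the first assertion it would map $(S\setminus S_{0,0,0})\cup\{v_1,v_3,v_4,v_5\}$ to $(T\setminus T_{0,0,0})\cup\{v_1,v_3,v_4,v_5\}$ — that is, it would be a group automorphism of $G$ carrying the connection set of $\overrightarrow{\Gamma_1'}$ to that of $\overrightarrow{\Gamma_2'}$. But \cite{Spiga-CI-3} shows these two digraphs, though isomorphic, are not isomorphic via any automorphism of $G$, a contradiction. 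Hence no such $\alpha$ exists.

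I would close by recording the consequence for the paper: \Cref{prop-iso} gives $\Gamma_1\cong\Gamma_2$, and \Cref{aut-fixes-5}, \Cref{aut-fixes-B_0}, \Cref{V-blocks}, \Cref{W-action}, and \Cref{aut-Bs-invariant} together show that the existence of \emph{any} group automorphism carrying $S\cup S^{-1}$ to $T\cup T^{-1}$ would force the existence of one satisfying all three hypotheses of this lemma; since the lemma rules that out, $\Gamma_1$ fails to have the CI property and $\mathbb Z_3^8$ is not a CI-group.

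There is no serious obstacle here — the heavy lifting was done in the earlier lemmas. The only points requiring care are the block-decomposition bookkeeping in the first paragraph (verifying that each block meets $S\cup S^{-1}$ in a single $S_{i,j,k}$ or a single inverse $S_{i,j,k}^{-1}$, never a mixture, and likewise for $T$) and quoting Spiga's result in exactly the form ``not isomorphic via a group automorphism'' rather than merely ``not DCI'', which is what \cite{Spiga-CI-3} in fact establishes.
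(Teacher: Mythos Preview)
Your proposal is correct and follows essentially the same approach as the paper: both argue that because $\alpha$ preserves each block $B_{i,j,k}$ and no nonzero index triple in the list has its negative also in the list, one gets $\alpha(S_{i,j,k})=T_{i,j,k}$ blockwise, then use pointwise fixing of $B_{0,0,0}$ for the first assertion, and specialize to the connection sets of $\overrightarrow{\Gamma_1'}$ and $\overrightarrow{\Gamma_2'}$ to contradict Spiga's result for the second. Your version makes the bookkeeping (that the ten nonzero triples are negative-free) slightly more explicit than the paper does, but the argument is the same.
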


\begin{proof}
Notice that for every $(i,j,k)\neq (0,0,0)$ we have either $B_{i,j,k} \cap S_{i,j,k}=\emptyset$ or $B_{i,j,k} \cap S_{i,j,k}^{-1} =\emptyset$ (or both), and $T_{i,j,k} =\emptyset$ if and only if $S_{i,j,k}=\emptyset$. Since $B_{i,j,k}^\alpha=B_{i,j,k}$, this means that $S_{i,j,k}^\alpha=T_{i,j,k}$. Hence, since $\alpha$ fixes $B_{0,0,0}$ pointwise and $S_{0,0,0}=T_{0,0,0}$, we have $S^\alpha=T$. In fact, since $\alpha$ fixes $B_{0,0,0}$ pointwise, for any $S'_{0,0,0} \subseteq B_{0,0,0}$ we have $$((S \setminus T_{0,0,0})\cup S'_{0,0,0} )^\alpha=(T\setminus T_{0,0,0})\cup S'_{0,0,0} ,$$ as claimed.

Since the connection sets of $\overrightarrow{\Gamma_1'}$ and $\overrightarrow{\Gamma_2'}$ have this form, such an $\alpha$ would act as a digraph isomorphism between them, but since \cite{Spiga-CI-3} proved these digraphs are not isomorphic via any group automorphism, this is impossible. Hence no such $\alpha$ exists.
\end{proof}

We pull all of our results in this section together in our conclusion.

\begin{cor}\label{no-aut}
There is no automorphism of $G$ that maps $S \cup S^{-1}$ to $T \cup T^{-1}$.
\end{cor}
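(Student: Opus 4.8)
The plan is to simply assemble the lemmas of this section into a short chain of implications, using a minimal-counterexample style reduction via the two "correction" automorphisms $\sigma$ and $\sigma_W$. Suppose for contradiction that some group automorphism of $G$ maps $S\cup S^{-1}$ to $T\cup T^{-1}$. First I would invoke~\Cref{aut-fixes-5} to replace it (if necessary, composing with $\sigma$, which fixes $S\cup S^{-1}$) by a group automorphism $\alpha$ that maps $S\cup S^{-1}$ to $T\cup T^{-1}$ and fixes both $v_5$ and $v_5^{-1}$. Then~\Cref{aut-fixes-B_0} upgrades this to the statement that $\alpha$ fixes every vertex of $S_{0,0,0}\cup S_{0,0,0}^{-1}$, and since (as noted in the proof of~\Cref{aut-fixes-5}) $B_{0,0,0}$ is the subgroup generated by these vertices and $\alpha$ fixes a generating set of it, $\alpha$ fixes $B_{0,0,0}$ pointwise.

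Next I would apply~\Cref{W-action} to conclude that $\alpha$ either leaves every $B_{i,j,k}$ invariant or sends every $B_{i,j,k}$ to $B_{-i,-j,-k}$; in the latter case~\Cref{aut-Bs-invariant} lets me compose with $\sigma_W$ (which fixes $S$ and fixes $B_{0,0,0}$ pointwise while negating the $w$-coordinates) to obtain a group automorphism with all the same properties that additionally leaves every $B_{i,j,k}$ invariant. At this stage~\Cref{lem-no-aut} applies directly and gives a contradiction: no group automorphism can simultaneously map $S\cup S^{-1}$ to $T\cup T^{-1}$, fix $B_{0,0,0}$ pointwise, and leave every $B_{i,j,k}$ invariant. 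This contradiction shows that the original automorphism cannot exist, proving the corollary.

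There is really no hard step here — all the substance has been front-loaded into the lemmas, especially the tedious mutual-neighbour counts of~\Cref{lem-mut-nbrs} and the reduction to Spiga's non-isomorphism result inside~\Cref{lem-no-aut}. The only thing to be careful about is bookkeeping: making sure each composition ($\sigma\alpha$, then possibly $\sigma_W\alpha$) still lands a map from $S\cup S^{-1}$ to $T\cup T^{-1}$ and still satisfies the hypotheses needed for the next lemma in the chain. Since $\sigma$ and $\sigma_W$ are genuine group automorphisms that each fix $S\cup S^{-1}$ setwise, pre-composition preserves the "maps $S\cup S^{-1}$ to $T\cup T^{-1}$" property, and the earlier lemmas were stated precisely so that they close up under these corrections, so the chain goes through cleanly.

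\begin{proof}
Suppose, for a contradiction, that there is a group automorphism of $G$ mapping $S\cup S^{-1}$ to $T\cup T^{-1}$. By~\Cref{aut-fixes-5}, there is such an automorphism $\alpha$ that moreover fixes $v_5$ and $v_5^{-1}$. By~\Cref{aut-fixes-B_0}, $\alpha$ then fixes every vertex of $S_{0,0,0}\cup S_{0,0,0}^{-1}$. As observed in the proof of~\Cref{aut-fixes-5}, $B_{0,0,0}$ is a subgroup of $G$ generated by (a subset of) $S_{0,0,0}\cup S_{0,0,0}^{-1}$, so $\alpha$ fixes $B_{0,0,0}$ pointwise. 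By~\Cref{W-action}, $\alpha$ either leaves every $B_{i,j,k}$ invariant or maps every $B_{i,j,k}$ to $B_{-i,-j,-k}$; in either case~\Cref{aut-Bs-invariant} provides a group automorphism $\beta$ that maps $S\cup S^{-1}$ to $T\cup T^{-1}$, fixes every vertex of $B_{0,0,0}$, and leaves every $B_{i,j,k}$ invariant. But~\Cref{lem-no-aut} asserts that no such $\beta$ exists. This contradiction proves the corollary.
\end{proof}
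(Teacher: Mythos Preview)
Your proof is correct and follows essentially the same chain of lemmas as the paper's own argument: invoke \Cref{aut-fixes-5}, then \Cref{aut-fixes-B_0}, observe that $S_{0,0,0}$ generates $B_{0,0,0}$ so the automorphism fixes $B_{0,0,0}$ pointwise, then use \Cref{aut-Bs-invariant} to get one leaving every $B_{i,j,k}$ invariant, and finish with \Cref{lem-no-aut}. The only cosmetic difference is that you explicitly route through \Cref{W-action} before \Cref{aut-Bs-invariant}, whereas the paper just cites \Cref{aut-Bs-invariant} directly (since that lemma already subsumes the case split).
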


\begin{proof}
By~\Cref{aut-fixes-5}, if there were such an automorphism, then there must be one that fixes $v_5$ and $v_5^{-1}$. Furthermore, by~\Cref{aut-fixes-B_0}, any such automorphism fixes every vertex of $S_{0,0,0}\cup S_{0,0,0}^{-1}$. Since it is a group automorphism and fixes every element of $S_{0,0,0}$, and this generates $B_{0,0,0}$, it must fix every vertex of $B_{0,0,0}$.

By~\Cref{aut-Bs-invariant}, this would imply the existence of an automorphism that maps $S\cup S^{-1}$ to $T\cup T^{-1}$, fixes every vertex of $B_{0,0,0}$, and leaves every $B_{i,j,k}$ invariant. Finally, by~\Cref{lem-no-aut}, no such automorphism exists.
\end{proof}

Putting this result together with~\Cref{prop-iso}, and using the well-known fact that every subgroup of a CI-group must be a CI-group, yields the main result of this paper. 

\begin{thm}
The group $\mathbb Z_3^8$ is not a CI-group. Neither is any group containing $\mathbb Z_3^8$ as a subgroup. In particular, $\mathbb Z_3^n$ is not a CI-group for $n \ge 8$.
\end{thm}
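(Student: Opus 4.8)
The plan is to assemble the pieces already in place: Proposition~\ref{prop-iso} furnishes an explicit (polynomial) graph isomorphism from $\Gamma_1=\Cay{G}{S\cup S^{-1}}$ to $\Gamma_2=\Cay{G}{T\cup T^{-1}}$, so these are two isomorphic Cayley graphs on $G\cong\mathbb Z_3^8$. Corollary~\ref{no-aut} says no group automorphism of $G$ carries $S\cup S^{-1}$ to $T\cup T^{-1}$. By the definition of the CI property, this means $\Gamma_1$ does not have the CI property, and hence $\mathbb Z_3^8$ is not a CI-group. So the first and essentially only substantive step is simply to cite these two results and conclude.

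Next I would handle the statement about larger groups. The relevant fact is the standard one, noted in the sentence preceding the theorem: every subgroup of a CI-group is itself a CI-group. (The quick argument: if $H\le G$ and $\Cay{H}{S}\cong\Cay{H}{T}$, then inducing up to $G$ — taking the disjoint-union/blow-up Cayley graph, or just noting $\Cay{G}{S}$ and $\Cay{G}{T}$ are isomorphic via the same vertex map extended over cosets — produces isomorphic Cayley graphs on $G$, whose witnessing automorphism of $G$ must stabilize $H$ and restrict to a suitable automorphism of $H$; one does need a small amount of care here, but it is classical, e.g. from Babai's work.) Applying the contrapositive: since $\mathbb Z_3^8$ is not a CI-group, no group containing $\mathbb Z_3^8$ as a subgroup can be a CI-group.

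Finally, for the last clause, observe that for $n\ge 8$ the group $\mathbb Z_3^n$ contains $\mathbb Z_3^8$ as a subgroup (take the span of any $8$ of the $n$ standard generators), so by the previous paragraph $\mathbb Z_3^n$ is not a CI-group. This completes the proof.

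As for obstacles: there really is no hard step left at this point — all the genuine work has been done in Sections~2 and~3, culminating in Proposition~\ref{prop-iso} and Corollary~\ref{no-aut}. If anything, the only thing requiring a word of justification (rather than a bare citation) is the ``subgroup of a CI-group is a CI-group'' fact, and even that is routine and well known; I would simply reference it rather than reprove it. So the ``main obstacle'' is purely expository: making sure the chain $\text{Prop.~\ref{prop-iso}} + \text{Cor.~\ref{no-aut}} \Rightarrow \Gamma_1$ is non-CI $\Rightarrow \mathbb Z_3^8$ non-CI $\Rightarrow \mathbb Z_3^n$ non-CI for $n\ge 8$ is stated cleanly.
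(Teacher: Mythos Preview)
Your proposal is correct and follows essentially the same approach as the paper: cite Proposition~\ref{prop-iso} and Corollary~\ref{no-aut} to conclude that $\Gamma_1$ is a non-CI Cayley graph on $\mathbb Z_3^8$, then invoke the well-known fact that subgroups of CI-groups are CI-groups to handle overgroups and $\mathbb Z_3^n$ for $n\ge 8$. The paper's own proof is even terser than yours, simply pointing to these two results and the subgroup fact without further elaboration.
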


\end{document}